\documentclass[a4paper, reqno]{amsart}
\usepackage[a4paper, margin=1.5in]{geometry}
\usepackage[utf8]{inputenc}
\usepackage[english]{babel}

\usepackage{graphicx}
\usepackage[pdfborder={0 0 0}]{hyperref}
\usepackage[utf8]{inputenc}
\usepackage[T1]{fontenc}
\usepackage{microtype}

\usepackage[mathscr]{eucal}

\usepackage[shortlabels]{enumitem}
\usepackage[super]{nth}

\usepackage[textsize = footnotesize]{todonotes}

\usepackage{amsmath}
\usepackage{amssymb}

\usepackage{amsthm}
\usepackage{graphicx}
\usepackage{ bbold }
\usepackage[capitalize, nameinlink]{cleveref}
\crefformat{equation}{#2(#1)#3}
\Crefformat{equation}{#2(#1)#3}
\crefname{section}{Chapter}{Chapters}
\crefname{subsection}{Section}{Sections}
\crefname{subsubsection}{Section}{Sections}
\crefname{subappendix}{Section}{Sections}
\crefname{subsubappendix}{Section}{Sections}

\usepackage{mathtools}
\usepackage{xparse}
\usepackage{tikz-cd}

\theoremstyle{plain}

\newtheorem{theorem}{Theorem}[section]
\newtheorem{lemma}[theorem]{Lemma}
\newtheorem{prop}[theorem]{Proposition}
\newtheorem{corollary}[theorem]{Corollary}

\theoremstyle{definition}
\newtheorem{remark}[theorem]{Remark}
\newtheorem{definition}[theorem]{Definition}
\newtheorem{example}[theorem]{Example}

\theoremstyle{remark}

\DeclareMathOperator{\colim}{colim}

\DeclareMathOperator{\ev}{ev}

\DeclareMathOperator{\Alg}{Alg}

\DeclareMathOperator{\Map}{Map}

\newcommand{\hide}[1]{}

\newcommand{\op}{\mathrm{op}}
\newcommand{\Fun}{\mathrm{Fun}}
\newcommand{\Fin}{\mathrm{Fin}}
\newcommand{\sC}{\mathscr{C}}
\newcommand{\sD}{\mathscr{D}}
\newcommand{\sE}{\mathscr{E}}
\newcommand{\sP}{\mathscr{P}}
\newcommand{\sO}{\mathscr{O}}

\newcommand{\Spc}{\mathscr{S}}

\newcommand{\Cat}{\mathscr{C}\mathrm{at}}

\newcommand{\Env}{\mathrm{Env}}

\newcommand{\Z}{\mathbf{Z}}

\newcommand{\E}{\mathbf{E}}

\NewDocumentCommand\derprojlim{e{_}}{\mathchoice
{\varprojlim  \IfValueT{#1}{_{\mathclap{#1}}}{}^{\!1\!}\mathop{}}
{\varprojlim^1  \IfValueT{#1}{_{#1}}}
{\varprojlim^1  \IfValueT{#1}{_{#1}}}
{\varprojlim^1  \IfValueT{#1}{_{#1}}}}

\newcommand{\Op}{\mathrm{Op}}
\newcommand{\Mon}{\mathrm{Mon}}
\newcommand{\lax}{\mathrm{lax}}
\newcommand{\Act}{\mathrm{Act}}

\newcommand{\Com}{\mathrm{Com}}
\newcommand{\Triv}{\mathrm{Triv}}
\newcommand{\1}{\mathbb{1}}

\title{Free Algebras via Monoidal Envelopes}

\author{Max Blans}
\address{Mathematical Institute, University of Oxford}
\email{max.blans@maths.ox.ac.uk}

\author{Sil Linskens}
\address{Fakult\"at f\"ur Mathematik, University of Regensburg}
\email{sil.linskens@ur.de}

\begin{document}

\begin{abstract}
 For any morphism of $\infty$-operads $\sP \to \sO$, we show that the free $\sO$-algebra on a $\sP$-algebra admits an explicit formula as the colimit over the $\sO$-monoidal envelope of $\sP$, providing a new and simple proof of the existence of relative free $\sO$-algebras.
\end{abstract}

\maketitle
\tableofcontents

\section{Introduction}
The study and use of homotopy coherent algebraic structure has a long history. In this new millennium, the methods by which we study such structures have been revolutionized by the systematic use of higher categories, as developed by \cite{HTT}, among many others. Specifically, Lurie introduced in \cite{HA} the theory of $\infty$-operads, which have become the most popular framework for studying homotopy coherent algebraic structures.

Crucial to any algebraic theory is an understanding of free algebras. It is of course well-known that given a (single colored) $\infty$-operad $\sO$ and a cocomplete symmetric monoidal $\infty$-category $\sC$ for which the tensor product commutes with colimits in each variable separately, the free $\sO$-algebra in $\sC$ on an object $X\in \sC$ is computed by the formula
\[
\mathrm{free}_{\sO}(X) \simeq \coprod_{n\geq 0} (\sO(n)\otimes X^{\otimes n})_{h \Sigma_n}.
\]
If the tensor product does not commute with all colimits then free algebras may still exist, but the formula is in general more complicated. A typical example is the category of pointed spaces equipped with the cartesian product. In this case the product only commutes with colimits over weakly contractible 
categories, and so the previous formulas are no longer correct. However there is a replacement: for example the free associative algebra on a pointed space $X$ is given by the \emph{James construction}, the colimit over the diagram 
\[\begin{tikzcd}
	X & {X\times X} & {X\times X\times X} & \dots,
	\arrow[shift left, from=1-1, to=1-2]
	\arrow[shift right, from=1-1, to=1-2]
	\arrow[shift left=2, from=1-2, to=1-3]
	\arrow[shift right=2, from=1-2, to=1-3]
	\arrow[from=1-2, to=1-3]
	\arrow[shift left, from=1-3, to=1-4]
	\arrow[shift right=3, from=1-3, to=1-4]
	\arrow[shift left=3, from=1-3, to=1-4]
	\arrow[shift right, from=1-3, to=1-4]
\end{tikzcd}\]
where the maps are given by inclusion of the basepoint into each factor. More precisely the indexing category of the colimit above is the category $\Delta^{\mathrm{inj}}$, the subcategory of the simplex category on the injective maps. In this article we are concerned with understanding how this and other similar formulas arise, and in which generality they compute free algebras. 

More specifically, we may reinterpret the construction before as an instance of a relative free algebra construction. As the most general situation we consider a map $p\colon \sP\to \sO$ of $\infty$-operads. We also suppose that $\sO$ has one color.  This assumption is not necessary, but we suppose it in this introduction for simplicity. Finally let $\sC$ be an $\sO$-monoidal $\infty$-category. Then we may consider the restriction functor 
\[
\mathrm{fgt}_\sP^\sO\colon \Alg_{\sO}(\sC) \to \Alg_{\sP}(\sC),
\]
which takes an $\sO$-algebra in $\sC$ and restricts it to a $\sP$-algebra. In the extreme case that $\sP$ is the trivial operad we obtain the forgetful functor $\Alg_{\sO}(\sC)\to \sC$. If $p$ is the map $\E_0\to 
\E_1$ and $\sC$ is the $\infty$-category of pointed spaces, we recover the forgetful functor $\Alg_{\E_1}(\Spc_\ast) \to \Spc_{\ast}$ considered before. The question addressed by this article is: When does the functor $\mathrm{fgt}_\sP^\sO$ admit a left adjoint $\mathrm{free}^{\sO}_{\sP}$?  We may call this supposed left adjoint the \emph{relative free functor},\footnote{This left adjoint is called \emph{operadic left Kan extension} in \cite{HA}.} and its value on $X\in \Alg_{\sP/\sO}(\sC)$ \emph{the relative free algebra on $X$}. Just as important is the next question: how can we effectively provide a formula for these relative free functors?

Both these questions are addressed in \cite{HA}. However the construction is very delicate, and specific to the point-set implementation of $\infty$-operads given there. It is also not so easy to read off the precise conditions under which a particular relative free algebra exists, or how to compute it explicitly. In this short article we observe that, under relatively mild conditions, relative free algebras can be constructed without much hard work. Moreover, our construction also immediately provides a formula for them.

We continue to use the notation introduced above. Given the map $p\colon \sP \to \sO$ of $\infty$-operads, we may consider the \emph{$\sO$-monoidal envelope} $\Env_{\sO}(\sP)$ of $\sP$, the universal $\sO$-monoidal $\infty$-category equipped with a $\sP$-algebra. In particular, given a $\sP$-algebra $A\colon \sP\to \sC$ in $\sC$, we obtain an $\sO$-monoidal functor 
\[
F_A\colon \Env_{\sO}(\sP)\to \sC.
\] 
Supposing that the tensor product $\otimes_f \colon \sC^{\times n} \to \sC$ commutes with $\Env_{\sO}(\sP)$ shaped colimits for every active arrow $f$ of $\sO$, we show that the colimit of $F_A$ is canonically a $\sO$-algebra with the universal property of the relative free algebra of $A$ along $p$. In other words, we show 
\[
\mathrm{free}_{\sP}^{\sO}(A) \simeq \colim\limits_{\Env_{\sO}(\sP)} F_A.
\]
If $p$ is the inclusion $\E_0\to \E_1$ then $\Env_{\E_1}(\E_0)$ is precisely $\Delta^\mathrm{inj}$, and so we deduce that the James construction computes the free associative algebra on an $\E_0$-algebra in large generality. In particular it applies to any symmetric monoidal category $\sC$ in which the tensor product commutes with weakly contractible colimits in each variable. An example of such a $\sC$ is the category of spectral Lie algebras. We use this to deduce that in spectral Lie algebras, there is an equivalence between $\Omega \Sigma X$ and the James construction on $X$.

Beyond simply proving the existence of relative free algebras, we also emphasize the utility of the formula obtained above. For example we specialize this in different cases to immediately obtain well-known explicit formulas for specific instances of (relative) free algebras. In another direction we explain how to construct and compute colimits in $\infty$-categories of algebras using the formula above.

\subsection*{Relation to other work}

The relation between monoidal envelopes and the construction of free algebras has appeared before, and is present implicitly in much recent work. In fact, our main result has appeared explicitly in the work of Nardin and Shah \cite{NS}, who moreover prove our main result in the more generalized setting of $\infty$-operads parametrized by an atomic orbital $\infty$-category. 
Morally our two approaches are similar, but the details of the proof are different. We hope that this article serves as an advertisement of the utility of this perspective on free algebras by emphasizing both the simplicity of the main result and its proof, as well as its wide applicability.

\subsection*{Conventions}
For the entirety of the article we will fix an arbitrary $\infty$-operad $\sO$. Given an $\infty$-operad $\sP$ over $\sO$ and an $\sO$-monoidal $\infty$-category $\sC$, we will denote by $\Alg_{\sP}(\sC)$ the category of $\sP$-algebras in $\sC$. This is denoted $\Alg_{\sP/\sO}(\sC)$ in \cite{HA}. We will typically omit the $\otimes$-superscript often appended to $\sO$-operads and $\sO$-monoidal $\infty$-categories. However if we wish to emphasize the existence of either as a fibration over $\Fin_\ast$, we may include the superscript. We will call a functor $f \colon I \to J$ of $\infty$-categories final if for every diagram $X \colon J \to \sC$ to an $\infty$-category $\sC$, the comparison map $\colim_I(X \circ f) \to \colim_J X$ is an isomorphism.
We say an $\infty$-category $\sC$ is weakly contractible if the space $|\sC|$ obtained by inverting all its morphisms is contractible.
Finally, since we work exclusively with $\infty$-categories and $\infty$-operads, we will drop the $\infty$ and simply refer to them as categories and operads.

\subsection*{Acknowledgements}
The authors would like to thank the organizers of the Young Topologist Meeting 2022 and the University of Copenhagen, where this collaboration began.
They would also like to thank Gijs Heuts and Rune Haugseng for helpful discussions related to this article.

While working on this project the first author was supported by the European Research Council (ERC) through the grant ``Chromatic homotopy theory of spaces'' (no. 950048) and by the Royal Society through the grant URF\textbackslash R1\textbackslash 211075.
The second author was supported by DFG Schwerpunktprogramm 1786 “Homotopy Theory and Algebraic Geometry” (project ID SCHW 860/1-1). 

\section{Some universal constructions on operads}

In this section we recall some basic universal constructions on operads which will be necessary for our construction of free algebras.

\subsection{Envelopes of $\sO$-operads}

\begin{definition}
Let $\Op$ be the category of operads and write $\Op_\sO$ for $\Op_{/\sO}$.
We define $\Mon^{\lax}_\sO \subset \Op_{\sO}$ to be the full subcategory of $\Op_{\sO}$ spanned by the $\sO$-monoidal categories. We define $\Mon_{\sO} \subset \Mon^{\lax}_{\sO}$ to be the wide subcategory spanned by the strong $\sO$-monoidal functors.
\end{definition}

\begin{lemma}\label{lem:Univ_Prop_Env}
The inclusion 
\[
\Mon_{\sO} \hookrightarrow \Op_{\sO}
\]
admits a left adjoint, which we denote by 
\[
\Env_{\sO}(-)\colon \Op_{\sO} \rightarrow \Mon_{\sO}.
\]
Given an operad $\sP$ over $\sO$, the $\sO$-monoidal category $\Env_{\sO}(\sP)$ is given by the fiber product
\[\begin{tikzcd}
	{\Env_{\sO}(\sP)^\otimes} & \sP^\otimes \\
	{\Act(\sO^\otimes)} & \sO^\otimes \\
	{\sO^\otimes,}
	\arrow[from=1-2, to=2-2]
	\arrow["{\ev_0}"', from=2-1, to=2-2]
	\arrow["{\ev_1}", from=2-1, to=3-1]
	\arrow[from=1-1, to=2-1]
	\arrow[from=1-1, to=1-2]
	\arrow["\lrcorner"{anchor=center, pos=0.125}, draw=none, from=1-1, to=2-2]
\end{tikzcd}\]
 which lives over $\sO^{\otimes}$ via the long vertical composite. Finally $\Act(\sO^{\otimes})$ is defined to be the full subcategory of $\Fun([1],\sO^{\otimes})$ spanned by the active arrows.
\end{lemma}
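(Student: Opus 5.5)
This lemma is essentially Lurie's construction of the monoidal envelope (HA, Construction 2.2.4.1 and Proposition 2.2.4.9), done relative to $\sO$. The plan has three steps: (i) show that the displayed fiber product $\Env_{\sO}(\sP)^\otimes \to \sO^\otimes$ is an $\sO$-monoidal category; (ii) produce a unit map $\sP^\otimes \to \Env_{\sO}(\sP)^\otimes$ of operads over $\sO$; (iii) show that restriction along the unit induces an equivalence
\[
\Map_{\Mon_{\sO}}(\Env_{\sO}(\sP),\sC) \xrightarrow{\ \sim\ } \Map_{\Op_{\sO}}(\sP,\sC)
\]
for every $\sO$-monoidal $\sC$. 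Existence of the left adjoint then follows pointwise.

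\textbf{Step (i).} I would show that $\ev_1\colon \Act(\sO^\otimes)\times_{\sO^\otimes}\sP^\otimes \to \sO^\otimes$ is a cocartesian fibration. Recall the inert–active factorization system on $\sO^\otimes$. Given an object $(x \rightsquigarrow y,\ p)$ with $x$ the image of $p\in\sP^\otimes$, and a morphism $g\colon y\to z$ in $\sO^\otimes$, factor the composite $x\to y\to z$ as an inert map $x\to x'$ followed by an active map $x'\rightsquigarrow z$. Then lift $x\to x'$ to an inert $p$-cocartesian arrow $p\to p'$ in $\sP^\otimes$. I would check that the resulting arrow $(x\rightsquigarrow y, p)\to(x'\rightsquigarrow z, p')$ is $\ev_1$-cocartesian, using uniqueness of factorizations, i.e.\ the contractibility of the relevant factorization spaces.

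Next, the Segal condition. Take $y\in\sO^\otimes_{\langle n\rangle}$ and its inert maps $\rho_i\colon y\to y_i$. The fiber over $y$ decomposes as the product of the fibers over the $y_i$. The reason is that an active map $x\rightsquigarrow y$ decomposes uniquely along the partition of the source induced by $\langle m\rangle\to\langle n\rangle$. Combined with the Segal condition on $\sP^\otimes$, this identifies the fiber with $\prod_i \Env_{\sO}(\sP)_{y_i}$, so the fibration is $\sO$-monoidal.

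\textbf{Step (ii).} The unit is $p\mapsto(\mathrm{id}_{x},p)$, using that identities are active. This functor sends inert arrows to cocartesian arrows, because in step (i) an inert $g$ applied to an identity produces an inert lift. So it is a map of operads over $\sO$.

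\textbf{Step (iii).} This is the main obstacle. One option is to argue as in HA 2.2.4.9 via Kan extension. A map $F\colon \sP^\otimes\to\sC^\otimes$ over $\sO^\otimes$ that preserves inerts should extend to $\Env_{\sO}(\sP)^\otimes$ by sending $(x\rightsquigarrow y,p)$ to the cocartesian pushforward of $F(p)$ along $x\rightsquigarrow y$. Concretely, this is the relative left Kan extension along the unit, relative to $\sC^\otimes\to\sO^\otimes$.

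I would check two things:
\begin{itemize}
\item the unit is fully faithful, and each object $(f,p)$ admits a $\sP^\otimes$-initial comma object, namely $p\to(f,p)$, so the relative left Kan extension exists and is computed by $\sO$-cocartesian transport (HTT 4.3.2);
\item a functor on the envelope is strong monoidal, i.e.\ preserves cocartesian arrows, exactly when it is left Kan extended from its restriction to $\sP^\otimes$.
\end{itemize}
The second point would use the fact that every arrow of $\Env$ factors as a unit-image arrow followed by a cocartesian one. Given these two points, HTT 4.3.2.15 says restriction is an equivalence from Kan extended functors onto inert-preserving functors. This identifies $\Fun^\otimes_{\sO}(\Env_{\sO}(\sP),\sC)\simeq\Alg_{\sP}(\sC)$, which gives the claimed adjunction.
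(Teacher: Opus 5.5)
Your proposal is correct and is essentially Lurie's argument in \cite[\S 2.2.4]{HA}: Proposition 2.2.4.4 gives the $\sO$-monoidal structure, and Proposition 2.2.4.9 gives the universal property via relative left Kan extension along the fully faithful unit. The paper cites exactly this instead of giving its own proof. Minor slips: $i(p)\to(p,f)$ is \emph{terminal}, not initial, in $\sP^\otimes\times_{\Env_{\sO}(\sP)^\otimes}\Env_{\sO}(\sP)^\otimes_{/(p,f)}$; the factorization you invoke only holds for arrows out of unit-image objects and must be combined with left cancellation of cocartesian arrows; and characterizing strong monoidal functors as left Kan extensions requires the extra condition that their restriction to $\sP^\otimes$ preserves inert arrows.
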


\begin{proof}
See \cite[Section 2.2.4]{HA}, or \cite{BHS}.
\end{proof}

\begin{remark}\label{rem: univ-alg-env}
Let $\sP$ be an operad over $\sO$. The unit of adjunction above gives a map
\[
\eta \colon \sP \to \Env_{\sO}(\sP),
\]
of $\sO$-operads, in other words a $\sP$-algebra in $\Env_{\sO}(\sP)$. 
If $\sC$ is any $\sO$-monoidal category and $A\colon \sP \to \sC$ is a $\sP$-algebra in $\sC$, then the adjunction provides an essentially unique $\sO$-monoidal functor
\[
F_A \colon \Env_\sO(\sP) \to \sC
\]
such that $F_A \circ \eta \simeq A$. Therefore, $\Env_{\sO}(\sP)$ is the universal $\sO$-monoidal category with a $\sP$-algebra.
\end{remark}

\begin{example} \label{ex: env-e0-com}
We write $\E_\infty = \Fin_*$ for the commutative operad. We also denote by $\E_0 = \Fin_*^{\mathrm{inj}} \subset \Fin_*$ the suboperad spanned by maps of finite pointed sets which are injective away from the basepoint. We recall that the category of $\E_0$-algebras in a monoidal category $\sC$ is equivalent to $\sC_{\1/}$, the category of objects under the unit $\1$ of $\sC$. A simple computation shows that
\[\Env_{\E_\infty}(\E_0) \simeq \Fin^{\mathrm{inj}},\]
the category of finite sets and injections equipped with the coproduct symmetric monoidal structure. 
\end{example}

\begin{example} \label{ex: env-e0-e1}
Let $\E_1$ denote the associative operad. The reader may wish to recall the explicit description of $\E_1$ given in \cite[Remark 4.1.1.4]{HA}. Once again a simple computation shows that
\[
\Env_{\E_1}(\E_0) \simeq \Delta_{+}^{\mathrm{inj}},
\] 
the augmented semi-simplex category equipped with the concatenation monoidal structure.
\end{example}

\begin{example}\label{ex:Env_of_triv}
Suppose $\sO$ is a single colored unital operad in the sense of \cite[Definition 2.3.1.1]{HA}, meaning that $\sO$ is pointed as a category. Let $\Triv\subset \Fin_\ast$ denote the suboperad spanned by the inert maps. One computes that
\[\Env_{\sO}(\sO \times \Triv) \simeq \coprod \sO(n)_{h\Sigma_n}.\]
In particular, for $\sO = \E_\infty$ we find that \[\Env_{\E_\infty}(\Triv) \simeq \coprod \ast_{h\Sigma_n} = \coprod B\Sigma_n = \Fin^\simeq.\]
\end{example}

\begin{example}
One may compute that $\Env_{\E_\infty}(\E_1)$ is given by the 1-category whose objects are given by finite sets $n,m$, such that a morphism $f\colon I\to J$ is given by a map of sets from $n$ to $m$, together with a linear order of the pre-image of each $j \in J$. This is symmetric monoidal via concatenation of finite sets. 
\end{example}

\begin{example}
Let $\E_k$ be the little $k$-cubes operad. As discussed in \cite[Remark 5.1.0.5.]{HA}, $\Env_{\E_\infty}(\E_k)$ can be identified with the topological category whose objects are given by disjoint unions of $k$-cubes and whose morphisms are embeddings which are rectilinear on each component. This is symmetric monoidal via disjoint union.
\end{example}

\begin{example}
More generally, given $k<j$ one can exhibit $\E_k$ as an $\E_j$-operad. Then $\Env_{\E_j}(\E_k)$ can be identified with the topological category whose objects are given by disjoint unions of $k$-cubes with a chosen component-wise rectilinear embedding into the first $k$-components of a $j$-cube. Morphisms are given by component-wise rectilinear embeddings which make the obvious triangle commute. This is $\E_j$-monoidal by whiskering embeddings into the $j$-cube in the natural way.
\end{example}

\subsection{Slices of $\sO$-monoidal categories}

In this section we recall the monoidality of slice categories.

\begin{definition}
Let $\sC$ be an $\sO$-monoidal category. Consider an $\sO$-algebra $A$ in $\sC$, given by an operad map $A \colon \sO\rightarrow \sC$. We define $\sC_{/A}$ to be the pullback 
\[\begin{tikzcd}
	{\sC_{/A}} & \sC^{[1]} \\
	\sO & \sC
	\arrow["t", from=1-2, to=2-2]
	\arrow["A"', from=2-1, to=2-2]
	\arrow[from=1-1, to=1-2]
	\arrow[from=1-1, to=2-1]
\end{tikzcd}\] in $\Op_{\sO}$, where $\sC^{[1]}$ refers to the cotensor in $\Op_{\sO}$.
\end{definition}

\begin{remark}
We note that cotensors and pullbacks in $\Op_{\sO}$ are computed in $\Cat_{/\sO^\otimes}$. In particular, for any color $X$ of $\sO$, the category $(\sC_{/A})_X$ agrees with the slice of $\sC_X$ over the functor $A_X \colon \sO_X\rightarrow \sC_X$.
\end{remark}

\begin{lemma}[{\cite[Section 2.2.2]{HA}}]
Let $\sC$ be $\sO$-monoidal, and let $A$ be an algebra object in $\sC$. Then $\sC_{/A}$ is an $\sO$-monoidal category and the composite \[\sC_{/A}\hookrightarrow \sC^{[1]} \xrightarrow{s} \sC\] is strong $\sO$-monoidal.\qedhere
\end{lemma}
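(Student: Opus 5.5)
We need to show that $\sC_{/A}\to\sO^{\otimes}$ is a cocartesian fibration, and that the source map $\sC_{/A}\to\sC$ preserves cocartesian edges. Since the statement concerns $\sC_{/A}$ fiberwise over $\sO^\otimes$, we first show it is an $\sO$-operad, then produce cocartesian lifts of active arrows.

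\textbf{Step 1: $\sC_{/A}$ is an $\sO$-operad.} $\Op_{\sO}$ is cotensored and has pullbacks, both computed in $\Cat_{/\sO^\otimes}$ by the preceding remark, and $\sC_{/A}$ is defined as a pullback there. So $\sC_{/A}$ is an $\sO$-operad, and its fiber over a color $X$ is $(\sC_X)_{/A_X}$. The Segal condition gives
\[
(\sC_{/A})_{\langle n\rangle}\simeq \prod_i (\sC_{X_i})_{/A(X_i)}
\]
over an object $(X_1,\dots,X_n)$ of $\sO^\otimes$.

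\textbf{Step 2: cocartesian lifts of active arrows.} Let $f\colon (X_i)\to Y$ be an active arrow of $\sO^\otimes$, and let $(c_i\to A(X_i))_i$ be an object over its source. Form the arrow
\[
\otimes_f(c_i)\to \otimes_f A(X_i)\to A(Y),
\]
where the first map is $\otimes_f$ applied to the structure maps and the second is the algebra structure map $A(f)$. This gives an object of $(\sC_Y)_{/A(Y)}$. The candidate lift is the edge of $\sC^{[1]}$ whose source component is the cocartesian edge $(c_i)\to \otimes_f(c_i)$ of $\sC$, whose target component is $A(f)$, and which lies over $f$.

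We then check that this edge is cocartesian in $\sC_{/A}$. A mapping space out of $\otimes_f(c_i)\to A(Y)$ in $\sC_{/A}$ over a map $g$ is a fiber product of mapping spaces in $\sC$ over mapping spaces of $\sO$. Cocartesianness of $(c_i)\to\otimes_f(c_i)$ in $\sC$, together with the fact that $A$ is a section over $\sO$ (so its target component is forced), identifies both sides. In practice the cleanest route is Lurie's criterion for pullbacks of cocartesian fibrations: an edge of $\sC^{[1]}$ is $p$-cocartesian when its source is cocartesian, and the pullback along $A$ retains those edges whose target lies in the image of $A$. The lift constructed above is exactly of this form.

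\textbf{Step 3: monoidality of the source map.} By construction, the source map $\sC_{/A}\to\sC^{[1]}\xrightarrow{s}\sC$ sends the cocartesian lifts of Step 2 to the cocartesian edges $(c_i)\to\otimes_f(c_i)$. It is therefore strong $\sO$-monoidal.

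\textbf{Main obstacle.} The main obstacle is Step 2: $\sC^{[1]}\to\sO^\otimes$ itself has cocartesian edges only where the target components are cocartesian, whereas here we need the target to be the non-cocartesian edge $A(f)$. We would handle this by working relative to $\sC^{[1]}\xrightarrow{t}\sC$. This is a cocartesian fibration (pointwise-in-source: cocartesian over $t$, with source cocartesian over $\sO^\otimes$). Base-changing it along $A$ and composing with the cocartesian fibration $\sC\to\sO^\otimes$ via the standard composition lemma for relative cocartesian edges in \cite{HTT} yields the claim.
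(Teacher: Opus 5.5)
The paper gives no proof of this lemma; it only cites \cite[Section 2.2.2]{HA}, where the slice is built for an arbitrary diagram of algebras $K\to\Alg_{\sO}(\sC)$ inside Lurie's simplicial-set model. Your argument is correct and takes a genuinely different, model-independent route.

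The decisive point is the one in your final paragraph. Write $p\colon\sC^{\otimes}\to\sO^{\otimes}$ for the structure map. Because $A$ is a section of $p$, the functor $\sC_{/A}\to\sO^{\otimes}$ is exactly the base change along $A$ of the target map $t\colon\sC^{[1]}\to\sC^{\otimes}$. Moreover, $t$ is a cocartesian fibration whose cocartesian edges are precisely those with $p$-cocartesian source component. Concretely, the lift of $\phi\colon b\to b'$ (over $g$) at $u\colon a\to b$ is the square $(a\to g_!a,\ \phi)$, and it is checked by the same mapping-space computation you sketch in Step 2.

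Since cocartesian fibrations are stable under pullback, this gives the $\sO$-monoidal structure on $\sC_{/A}$ in one stroke. It also shows that the source functor preserves \emph{all} cocartesian edges, not only those over active arrows. Consequently, treating active arrows separately in Step 2 is unnecessary, and so is the appeal to a composition lemma, since $p\circ A$ is the identity.

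One sentence should be corrected. An edge of $\sC^{[1]}$ with cocartesian source is not in general cocartesian over $\sO^{\otimes}$; that requires both components to be cocartesian, as you yourself note later. What is true is that such an edge is $t$-cocartesian.

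Your approach is short and fits the paper's model-independent framework. Lurie's treatment buys more generality, handling slices over diagrams of algebras rather than a single algebra, at the cost of working in a specific point-set model.
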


\begin{remark}
Suppose for simplicity that $\sO$ has a single colour. Then the $\sO$-monoidal structure on $\sC_{/A}$ constructed before is such that given an active arrow $\phi\colon \langle 2 \rangle  \rightarrow \langle 1\rangle$ in $\sO$ the corresponding tensor operation $\otimes_\phi\colon (\sC_{/A})^2\rightarrow \sC_{/A}$ is given by \[(X\rightarrow A, Y\to A) \mapsto X\otimes_\phi Y \to A\otimes_\phi A\xrightarrow{\phi_A} A.\] This discussion readily generalizes to higher-arity operations and operads of multiple colours.
\end{remark}

\begin{lemma}\label{univ-prop-slice}
Let $F \colon \sP \rightarrow \sC$ be a map of operads over $\sO$.
Then the space of lax monoidal lifts of $F$ along the projection $s \colon \sC_{/A} \to \sC$
\[\begin{tikzcd}
	& {\sC_{/A}} \\
	\sP & \sC
	\arrow["F"', from=2-1, to=2-2]
	\arrow["\widetilde{F}", dotted, from=2-1, to=1-2]
	\arrow[from=1-2, to=2-2]
\end{tikzcd}\]
is equivalent to the space $\Map_{\Alg_{\sP}(\sC)}(F,\mathrm{fgt}_{\sP}^{\sO}A)$ of $\sP$-algebra maps from $F$ to the composite
\[
\mathrm{fgt}_{\sP}^{\sO}A \colon \sP \to \sO \xrightarrow{A} \sC.
\]
\end{lemma}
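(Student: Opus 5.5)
Since $\sC_{/A}$ is defined as the pullback $\sO \times_{\sC} \sC^{[1]}$ in $\Op_{\sO}$, the space of lax monoidal lifts can be computed formally. A lift $\widetilde F\colon \sP \to \sC_{/A}$ in $\Op_{\sO}$ is the same as an operad map $G\colon \sP \to \sC^{[1]}$ over $\sO$ together with an identification $t\circ G \simeq A\circ p$ as maps $\sP \to \sC$ over $\sO$. Here we use that the map $\sP \to \sO$ is unique, as $\sO$ is terminal in $\Op_{\sO}$. The condition of being a lift of $F$ adds an identification $s\circ G \simeq F$.

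Thus the space of lifts is the fiber of
\[
(s,t)\colon \Map_{\Op_{\sO}}(\sP,\sC^{[1]}) \to \Map_{\Op_{\sO}}(\sP,\sC)\times \Map_{\Op_{\sO}}(\sP,\sC)
\]
over the point $(F, A\circ p)$. Note that $A\circ p = \mathrm{fgt}^{\sO}_{\sP}A$.

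The key step is to identify $\Map_{\Op_{\sO}}(\sP,\sC^{[1]})$ with $\Alg_{\sP}(\sC)^{\simeq}$-valued arrows. Concretely, I would show that
\[
\Alg_{\sP}(\sC^{[1]}) \simeq \Fun([1],\Alg_{\sP}(\sC)),
\]
compatibly with $s$ and $t$. Here $\Alg_{\sP}(\sC)$ is the full subcategory of $\Fun_{\sO^\otimes}(\sP^\otimes,\sC^\otimes)$ on maps preserving inert morphisms. The cotensor is computed in $\Cat_{/\sO^\otimes}$, as the remark notes, so $(\sC^{[1]})^\otimes = \sC^\otimes\times_{\Fun([1],\sO^\otimes)}\sO^\otimes$, pulled back along the constant-arrow map. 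Hence
\[
\Fun_{\sO^\otimes}(\sP^\otimes,(\sC^{[1]})^\otimes) \simeq \Fun([1],\Fun_{\sO^\otimes}(\sP^\otimes,\sC^\otimes)).
\]

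It remains to check that under this equivalence the inert-preserving functors on the left correspond to arrows between inert-preserving functors on the right. The inert (coCartesian over inerts in $\Fin_*$) morphisms of $\sC^{[1]}$ are exactly those whose components at $0$ and $1$ are inert in $\sC$. This follows from the pointwise description of coCartesian edges in a cotensor, since evaluation at $0$ and $1$ preserves and jointly detects them. I expect this verification to be the main obstacle, though it is standard (cf.\ \cite[Section 2.2.2]{HA}).

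With this in hand, passing to maximal subgroupoids gives
\[
\Map_{\Op_{\sO}}(\sP,\sC^{[1]}) \simeq \Map(\Delta^1,\Alg_{\sP}(\sC)^{\simeq}\text{-level}) = \Fun([1],\Alg_{\sP}(\sC))^{\simeq}.
\]
Under this identification, $(s,t)$ becomes evaluation at the endpoints. The fiber of $\Fun([1],\mathscr{D})^{\simeq}\to \mathscr{D}^{\simeq}\times\mathscr{D}^{\simeq}$ over $(x,y)$ is $\Map_{\mathscr{D}}(x,y)$. Applying this with $\mathscr{D}=\Alg_{\sP}(\sC)$, $x=F$ and $y=\mathrm{fgt}^{\sO}_{\sP}A$ yields the claim.
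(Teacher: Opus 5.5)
Your proof is correct and follows the same route as the paper, whose proof simply says the lemma follows by unwinding the universal properties of the pullback and the cotensor in $\Op_{\sO}$. You carry out that unwinding explicitly, including the check that $\Alg_{\sP}(\sC^{[1]}) \simeq \Fun([1],\Alg_{\sP}(\sC))$ via pointwise inert morphisms. (Two harmless slips: the cotensor should read $(\sC^{[1]})^\otimes = \Fun([1],\sC^\otimes)\times_{\Fun([1],\sO^\otimes)}\sO^\otimes$, and the intermediate expression before $\Fun([1],\Alg_{\sP}(\sC))^{\simeq}$ should just be dropped.)
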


\begin{proof}
This follows immediately after unwinding the universal property of the pullback and cotensor in $\Op_{\sO}$. See also \cite[Lemma 2.12]{AC-B}.
\end{proof}

\begin{remark}\label{rmk_tilde_F_strong}
Note that the forgetful functor $\sC_{/A}\rightarrow \sC$ is conservative. This implies that, in the setting above, if $F$ is strong $\sO$-monoidal then any lax $\sO$-monoidal lift $\widetilde{F}$ of $F$ to $\sD_{/A}$ is again strong $\sO$-monoidal.
\end{remark}

\section{Colimits of lax \texorpdfstring{$\sO$}{O}-monoidal functors}

Let $\sO$ be an operad. For any sequence of objects $X_1, \ldots, X_n, Y \in \sO$, we write $\mathrm{Mul}_{\sO}(\{X_{i}\}_{i = 1}^n, Y)$ for the space of multimorphisms from the sequence $X_1, \ldots, X_n$ to $Y$.

\begin{definition}
Let $\mathcal{I}$ be a category and let $\sC$ be an $\sO$-monoidal category. We say that $\sC$ is $\sO$-monoidally compatible with $\mathcal{I}$-shaped colimits if for each $X \in \sO$, $\sC_X$ admits 
$\mathcal{I}$-shaped colimits, and for every
sequence of objects $X_1, \ldots, X_n, Y \in \sO$ and every active morphism $f \in \mathrm{Mul}_{\sO}(\{X_i\}_{i = 1}^n, Y)$, the induced functor
\[
\otimes_f \colon \sC_{X_1} \times \cdots \times \sC_{X_n} \to \sC_Y
\]
commutes with $\mathcal{I}$-shaped colimits in each variable separately.
\end{definition}

\begin{lemma}\label{univ_prop_colim}
Let $\sC$ and $\sD$ be $\sO$-monoidal categories and suppose that for each $X \in \sO$, $\sD$ is $\sO$-monoidally compatible with $\sC_X$-shaped colimits. Suppose $F\colon \sC \rightarrow \sD$ is a lax $\sO$-monoidal functor. Then the colimit of $F$ is canonically an $\sO$-algebra in $\sD$, and as such enjoys the following universal property:
there is a natural equivalence
\[
\Alg_{\sO}(\colim F, A)\simeq \Alg_{\sC}(F, \mathrm{const}_A) 
\] between the space of $\sO$-algebra maps $\colim F \rightarrow A$ and the space of $\sC$-monoidal natural transformations $F\to \mathrm{const}_A$, where $\mathrm{const}_A$ denotes the composite
\[
\sC \to \sO \xrightarrow{A} \sD.
\]
\end{lemma}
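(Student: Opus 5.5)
The strategy is to construct the colimit as an $\sO$-algebra by operadic left Kan extension along the structure map $p\colon \sC^\otimes \to \sO^\otimes$, and to read off the universal property from the universal property of left Kan extensions. Concretely, view the lax $\sO$-monoidal functor $F$ as a map of operads $\sC^\otimes \to \sD^\otimes$ over $\sO^\otimes$. For each color $Y\in\sO$ consider the fiber $\sC_Y$. The candidate value of the algebra at $Y$ is $\colim_{\sC_Y} F_Y \in \sD_Y$, which exists by hypothesis. I will take ``the colimit of $F$'' to mean this family of fiberwise colimits (for single-colored $\sO$ it is just $\colim F$). The goal is to show that these objects assemble into an $\sO$-algebra $L\colon \sO\to\sD$, equipped with a natural transformation $F\Rightarrow L\circ p$ of lax $\sO$-monoidal functors, which is initial.

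Step one produces the multiplications. Fix an active $f\in\mathrm{Mul}_\sO(\{X_i\},Y)$. The lax structure of $F$ gives, for $c_i\in\sC_{X_i}$, compatible maps
\[
\otimes_f(F c_1,\dots,F c_n)\to F(\otimes_f(c_1,\dots,c_n))\to \colim_{\sC_Y}F_Y.
\]
These are natural in $(c_i)\in\prod\sC_{X_i}$, hence induce a map from $\colim_{\prod \sC_{X_i}}\otimes_f(F c_1,\dots,Fc_n)$. The compatibility hypothesis says that $\otimes_f$ commutes with $\sC_{X_i}$-shaped colimits in the $i$th variable. Applying this one variable at a time identifies the source with $\otimes_f(\colim F_{X_1},\dots,\colim F_{X_n})$, and this is precisely where the hypothesis that $\sD$ is compatible with $\sC_X$-shaped colimits for all $X$ is used. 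To handle the coherences in a clean way, I would phrase everything as a relative left Kan extension. Consider the functor $F\colon\sC^\otimes\to\sD^\otimes$ over $\sO^\otimes$ and left Kan extend it along $p$ relative to $\sD^\otimes\to\sO^\otimes$. Since $p$ is a cocartesian fibration, the comma category over $o\in\sO^\otimes$ has the fiber $\sC^\otimes_o\simeq\prod\sC_{X_i}$ as a final subcategory. The relative colimit then exists pointwise as $\colim$ of the cocartesian transports, which computes to $(\colim F_{X_i})_i$ by the compatibility hypothesis, applied also to the inert maps (which are just projections).

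Step two checks that the resulting functor $L\colon\sO^\otimes\to\sD^\otimes$ preserves inert morphisms, so that it is an operad map and hence an $\sO$-algebra. This holds because, under $\sD^\otimes_o\simeq\prod\sD_{X_i}$, the value at $o$ is the tuple of fiberwise colimits, and inert maps act by projection on both sides. The universal property of the relative left Kan extension then gives an equivalence between maps $L\to A$ over $\sO^\otimes$ and maps $F\to A\circ p$ over $\sO^\otimes$. It remains to restrict both sides to transformations that are maps of operads. A natural transformation of operad maps is a functor $\sC^\otimes\times[1]\to\sD^\otimes$, which is automatically an algebra map provided its endpoints are operad maps. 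Hence both sides are exactly the spaces $\Alg_\sO(\colim F,A)$ and $\Alg_\sC(F,\mathrm{const}_A)$.

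The main obstacle is the existence of the relative left Kan extension and the computation of its pointwise values. One has to know that relative colimits in $\sD^\otimes$ over $\sO^\otimes$ of diagrams valued in a fiber exist and are computed fiberwise. By \cite[HTT 4.3.1.10]{HTT}, this holds precisely when the cocartesian pushforward functors preserve these colimits. For active pushforwards, which involve several variables, this requires colimits of product-shaped diagrams. I would reduce colimits over $\prod\sC_{X_i}$ to iterated colimits in each variable, using separate cocontinuity of $\otimes_f$.
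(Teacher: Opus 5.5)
There is a genuine gap in the device you use to assemble the coherences. A plain relative left Kan extension of $F$ along $p$, taken over the full comma categories $\sC^\otimes\times_{\sO^\otimes}\sO^\otimes_{/o}$ with relative colimits in the sense of \cite{HTT}, is the wrong construction. In general it does not even exist.

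Your reduction to the fiber $\sC^\otimes_o\simeq\prod_j\sC_{X_j}$ is fine. However, the colimit in $\sD^\otimes_o\simeq\prod_i\sD_{X_i}$ of $(c_j)_j\mapsto(Fc_i)_i$ is computed componentwise. Its $i$th component is $\colim_{(c_j)\in\prod_j\sC_{X_j}}F(c_i)$, which agrees with $\colim F_{X_i}$ only if $\prod_{j\neq i}\sC_{X_j}$ is weakly contractible.

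Worse, by the criterion you quote, a diagram in a fiber is a relative colimit if and only if every cocartesian pushforward carries it to a colimit. This fails already over a single color. Take $\sO=\E_\infty$, $\sD=\Spc$ with the cartesian product, $\sC=\Fin^{\simeq}$ and $F(S)=X^{S}$, which satisfies all hypotheses of the lemma. The active map $\langle 1\rangle\to\langle 2\rangle$, $1\mapsto 1$, pushes forward as $x\mapsto(x,\ast)$. Since $\colim_{\Fin^{\simeq}}\ast=|\Fin^{\simeq}|\not\simeq\ast$, the fiberwise colimit is not a relative colimit, and no relative colimit exists at $\langle 1\rangle$.

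The underlying confusion is in your last paragraph. Separate cocontinuity of $\otimes_f$ controls colimits of \emph{external} tensors $\otimes_f(G_1(i_1),\dots,G_n(i_n))$ over $\prod_k I_k$; this is why your Step one maps are correct. It says nothing about $\otimes_f$, or about the unit-inserting pushforwards, preserving colimits in the product category $\prod_i\sD_{X_i}$, which is what relative colimits demand. So Step two's claims, that the values at $o$ are $(\colim F_{X_i})_i$ and hence that $L$ is an operad map, are unfounded.

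What does make your Step one maps coherent is Lurie's \emph{operadic} left Kan extension. There one only extends at colors $X\in\sO$, over the \emph{active} slice $(\sC^\otimes_{\mathrm{act}})_{/X}$, using operadic rather than relative colimits. In that slice $\sC_X$ is final via cocartesian pushforward along the active map. Separate compatibility of the $\otimes_f$ with $\sC_X$-shaped colimits is precisely the condition under which operadic colimits of $\sC_X$-diagrams are ordinary colimits in $\sD_X$. The free-algebra machinery of \cite[\S 3.1]{HA} then yields the universal property; this is the route of \cite{AC-B}.

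The paper sidesteps operadic colimits altogether:
\begin{enumerate}
\item It embeds $\sD$ $\sO$-monoidally into a presentably $\sO$-monoidal category by a functor preserving $\sC_X$-shaped colimits.
\item It uses that the diagonal $\sD\to\Fun_\sO(\sC,\sD)$ into the Day convolution is an operadic right adjoint, so $\colim_\sC$ is $\sO$-monoidal.
\item Passing to algebras, $\colim_\sC$ induces a left adjoint $\Fun^{\sO\textup{-lax}}(\sC,\sD)\simeq\Alg_\sO(\Fun_\sO(\sC,\sD))\to\Alg_\sO(\sD)$ to $A\mapsto\mathrm{const}_A$.
\end{enumerate}
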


\begin{proof}
Note that we may assume that both $\sC$ and $\sD$ are small. We moreover claim that $\sD$ embeds in an $\sO$-monoidal category which is presentably $\sO$-monoidal, as defined for example in \cite[Definition 3.26]{LNP}, such that the embedding preserves $\sC_X$-colimits for all $X\in \sC$. Namely we may simply take the composite 
\[
\sD \xrightarrow{\mathcal{Y}} \sP(\sD) \to L^{-1}\sP(\sD),
\] 
of the Yoneda embedding to the presheaf category on $\sD$, equipped with the Day convolution symmetric monoidal structure, followed by the localization onto the $L$-local objects, where $L$ is the collection of maps 
\[
\colim_{\sC_{X}}(\mathcal{Y}(G))\to \mathcal{Y}(\colim_{\sC_{X}}(G))
\]
for all $X\in \sO$ and $G\colon \sC_X\to \sD$. By \cite[Proposition 2.2.1.9.]{HA} the collection of $L$-local objects $L^{-1}\sP(\sD)$ is again presentably $\sO$-monoidal. Finally, by construction the composite $\sD \to L^{-1}\sP(\sD)$ preserves $\sC_X$-indexed colimits for all $X\in \sO$. 

In this way we reduce to the case where the $\sO$-monoidal category $\sD$ is compatible with all colimits. We then consider $\sO$-monoidal Day convolution operad $\Fun_\sO(\sC,\sD)$, which is an $\sO$-monoidal category. One can fairly easily see that the diagonal functor $\Delta \colon \sD\rightarrow \Fun_\sO(\sC,\sD)$ is an operadic right adjoint, see for example \cite[Proposition 3.34, Remark 3.35]{LNP}. In particular, passing to left adjoints we obtain an $\sO$-monoidal structure on the left adjoint
\[\colim_\sC \colon \Fun_\sO(\sC,\sD) \rightarrow \sD\]
of the diagonal functor. Therefore $\colim_\sC$ again induces a left adjoint functor on categories of $\sO$-algebras
\[\colim_\sC\colon \Fun^{\sO\textup{-lax}}(\sC,\sD) \simeq \Alg_\sO(\Fun_\sO(\sC,\sD))\rightarrow \Alg_\sO(\sD),\] where we have also applied the universal property of Day convolution for the first equivalence, proving the lemma.
\end{proof}

\begin{remark}
The result above is proved under (mildly) stronger hypothesis in {\cite[Section 2.3]{AC-B}}. We note that the proof there uses the theory of operadic colimits due to Lurie, while we avoid this by appealing to the universal property of Day convolution.
\end{remark}

\section{Free algebras}

Having recapped the main universal constructions necessary, we may introduce our main theorem.

\begin{theorem} \label{thm: free-algebra-colim-env}
Consider $\sP\in \Op_{\sO}$ and $\sC\in \Mon_{\sO}$. Suppose that $\sC$ is $\sO$-monoidally compatible with $\Env_\sO(\sP)$-shaped colimits. Then the forgetful functor \[\mathrm{fgt}^\sO_\sP \colon \Alg_{\sO}(\sC) \to \Alg_{\sP}(\sC)\] admits a left adjoint \[\mathrm{free}_{\sP}^{\sO} \colon \Alg_{\sP}(\sC) \to \Alg_{\sO}(\sC).\] Furthermore, the left adjoint is equivalent to the composite
\[\Alg_{\sP}(\sC)\simeq \Fun^{\sO\textup{-}\otimes}(\Env_\sO(\sP),\sC) \xrightarrow{\colim} \Alg_{\sO}(\sC).\] 
\end{theorem}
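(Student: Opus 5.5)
The plan is to construct the left adjoint pointwise: for each $\sP$-algebra $A$ we exhibit an $\sO$-algebra $\colim F_A$ together with a natural equivalence
\[
\Map_{\Alg_\sO(\sC)}(\colim F_A, B) \simeq \Map_{\Alg_\sP(\sC)}(A, \mathrm{fgt}^\sO_\sP B)
\]
for every $\sO$-algebra $B$. The existence of the adjoint then follows, and the identification of it with the displayed composite is automatic.

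First, the equivalence $\Alg_\sP(\sC)\simeq \Fun^{\sO\textup{-}\otimes}(\Env_\sO(\sP),\sC)$ is the adjunction of \cref{lem:Univ_Prop_Env}, read as in \cref{rem: univ-alg-env}: $A\mapsto F_A$, with inverse given by precomposing with $\eta$.

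Second, apply \cref{univ_prop_colim} to the strong (hence lax) $\sO$-monoidal functor $F_A\colon \Env_\sO(\sP)\to\sC$. The hypothesis on $\sC$ is exactly that $\sC$ is compatible with $\Env_\sO(\sP)$-shaped colimits. The lemma requires compatibility with $\Env_\sO(\sP)_X$-shaped colimits for each color $X$, so we either read the hypothesis fiberwise or observe that colimits over the full envelope decompose over the fibers in the relevant way. We conclude that $\colim F_A$ is an $\sO$-algebra with
\[
\Map_{\Alg_\sO(\sC)}(\colim F_A,B)\simeq \Map(F_A,\mathrm{const}_B),
\]
where the right-hand side is the space of $\sO$-monoidal natural transformations of lax $\sO$-monoidal functors $\Env_\sO(\sP)\to\sC$, and $\mathrm{const}_B$ is the composite $\Env_\sO(\sP)\to\sO\xrightarrow{B}\sC$.

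Third, identify this space with $\Map_{\Alg_\sP(\sC)}(A,\mathrm{fgt} B)$, which is the main step. By \cref{univ-prop-slice} applied with $\Env_\sO(\sP)$ in place of $\sP$, the space $\Map(F_A,\mathrm{const}_B)$ is the space of lax $\sO$-monoidal lifts of $F_A$ along $\sC_{/B}\to\sC$. By \cref{rmk_tilde_F_strong} every such lift is automatically strong, so these are lifts in $\Mon_\sO$. We then use the adjunction $\Env_\sO\dashv$ inclusion, applied in the slice $\Mon_{\sO/\sC}$ versus $\Op_{\sO/\sC}$. Strong $\sO$-monoidal lifts of $F_A$ correspond to operad-map lifts of $A=F_A\circ\eta$ along $\sC_{/B}\to\sC$. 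Applying \cref{univ-prop-slice} once more, now for $\sP$, these are exactly the maps $A\to\mathrm{fgt}^\sO_\sP B$ of $\sP$-algebras.

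Finally, all these equivalences must be checked to be natural in $B$, and also in $A$ if one wants a functor on the nose. Each step is a universal property, so naturality should follow by running the same argument with $\sC_{/B}$ varying functorially in $B$. Given this, the pointwise left adjoints assemble into a left adjoint equal to the stated composite.

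I expect the main obstacle to be the third step. The difficulty is passing the adjunction of \cref{lem:Univ_Prop_Env} to relative lifting problems over $\sC$ while keeping track of strong versus lax monoidality. This requires checking that the fiber of $\Map_{\Mon_\sO}(\Env_\sO\sP,\sC_{/B})\to\Map_{\Mon_\sO}(\Env_\sO\sP,\sC)$ over $F_A$ matches the corresponding fiber in $\Op_\sO$. That in turn follows from naturality of the adjunction equivalence together with the fact that $\sC_{/B}\to\sC$ is a strong $\sO$-monoidal functor.
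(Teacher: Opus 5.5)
Your proposal is correct and follows essentially the same route as the paper. You use \cref{univ_prop_colim} followed by \cref{univ-prop-slice} for $\Env_\sO(\sP)$ to identify $\Map(\colim F_A, B)$ with lax lifts of $F_A$ along $\sC_{/B}\to\sC$; \cref{rmk_tilde_F_strong} makes these lifts strong, the envelope adjunction turns them into lifts of $A$, and \cref{univ-prop-slice} for $\sP$ finishes. Your extra remarks, on reading the compatibility hypothesis fiberwise, comparing fibers of mapping spaces in $\Mon_\sO$ and $\Op_\sO$, and naturality, fill in details the paper leaves implicit.
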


\begin{remark}
Fix a $\sP$-algebra $A \in \Alg_{\sP / \sO}(\sC)$ and let $F_A \colon \Env_{\sO}(\sP) \to \sC$ be the corresponding $\sO$-monoidal functor, as defined in Remark \ref{rem: univ-alg-env}. Then we may informally summarize the conclusion of the theorem as giving a natural equivalence
\[
\mathrm{free}_{\sP}^{\sO} A  \simeq \colim_{\Env_{\sO}(\sP)} F_A,
\] where the colimit of $F_A$ is canonically an $\sO$-algebra via Lemma \ref{univ_prop_colim}. 

We note that the theorem above also provides a description of the multiplication in $\mathrm{free}_{\sP}^{\sO} A$ associated to an active morphism in $\sO$, which is often used in the literature. For simplicity we assume $\sO$ has a single color and the active morphism is two-to-one, but the description clearly generalizes. Let us also write $\mathcal{E}$ for $\Env_\sO(\sP)$. In this case, given an active arrow $\phi\colon \langle 2\rangle \to \langle 1\rangle$, the corresponding operation 
\[
{\mathrm{free}_{\sP}^{\sO}A} \otimes_\phi {\mathrm{free}_{\sP}^{\sO}A} \to \mathrm{free}_{\sP}^{\sO}A
\]
is isomorphic to the composite
\[
\colim_{\mathcal{E}} F_A {\otimes_\phi}  \colim_{\mathcal{E}} F_A \simeq \colim_{\mathcal{E}\times \mathcal{E}} F_A \otimes_\phi F_A \simeq \colim_{\mathcal{E}\times \mathcal{E}} F_A(-\otimes_\phi -) \to \colim_{\mathcal{E}} F_A,
\]
where the final map is that induced on colimits by the functor $\otimes_\phi\colon \mathcal{E}\times \mathcal{E} \to \mathcal{E}$.  
\end{remark}

\begin{proof}
We will show that $\colim F_A$ has the required universal property. Therefore fix a $\sO$-algebra $B$. First we combine Lemma \ref{univ_prop_colim} and Lemma \ref{univ-prop-slice} to conclude that the mapping space $\Map_{\Alg_{/\sO}(\sC)}(\colim F_A,B)$ is equivalent to the space of lax $\sO$-monoidal lifts 
\[
\begin{tikzcd}
	& {\sC_{/B}} \\
	\Env_{\sO}(\sP) & \sC.
	\arrow["F_A"', from=2-1, to=2-2]
	\arrow["\widetilde{F}_A", dotted, from=2-1, to=1-2]
	\arrow[from=1-2, to=2-2]
\end{tikzcd}
\] 
As noted in Remark \ref{rmk_tilde_F_strong}, $\widetilde{F}_A$ is automatically strong monoidal. Therefore diagrams as above in fact live in $\Mon_{\sO}$, and we may apply the universal property of $\Env_{\sO}$ to conclude that the space of such diagrams is equivalent to the space of lifts in the following diagram 
\[
\begin{tikzcd}
& {\sC_{/B}} \\
\sP & \sC.
\arrow["A"', from=2-1, to=2-2]
\arrow["\widetilde{A}", dotted, from=2-1, to=1-2]
\arrow[from=1-2, to=2-2]
\end{tikzcd}
\] Finally we apply Lemma \ref{univ-prop-slice} to conclude that the space of such diagrams is equivalent to $\Map_{\Alg_{\sP/\sO}(\sC)}(A, \mathrm{fgt}_{\sP}^{\sO} B)$.
\end{proof}

\subsection{Examples}

We briefly give some examples of this theorem.

\begin{example}
We may specialize to the case of $\sP\to \sO$ being the map $\Triv\times_{\E_\infty} \sO \to \sO$ of operads. In this case $\Alg_{\Triv(\sO)\times_{\E_\infty} \sO}(\sC)\simeq \sC$, and so the previous theorem gives conditions for the existence of a left adjoint to the forgetful functor 
\[
\Alg_{\sO}(\sC) \to \sC
\] from $\sO$-algebras in $\sC$ to $\sC$. Suppose for simplicity that $\sO$ is a single colored unital operad. We deduce that for suitable $\sC$, the free $\sO$-algebra functor 
\[\mathrm{Free}_\sO\colon \sC \simeq \Alg_{(\sO \times_{\E_\infty} \Triv)}(\sC)\rightarrow \Alg_{\sO}(\sC)\] is computed via the formula
\[\mathrm{Free}_\sO(X) = \coprod (\sO(n)\times X^{\otimes n})_{h\Sigma_n}.\] This recovers the usual formula for free algebras, see for example \cite[Example 3.1.3.14]{HA}.
\end{example}

\begin{example}
Consider the standard inclusion $\E_0 \to \E_1$ and recall from Example \ref{ex: env-e0-e1} that $\Env_{\E_1}(\E_0) = \Delta_{+}^{\mathrm{inj}}$. An $\E_0$-algebra in a monoidal category $\sC$ is nothing more that an object $X$ of $\sC$ together with a map $\mathbf{1} \to X$, where $\mathbf{1}$ denotes the unit of $\sC$.
An $\E_0$-algebra $\mathbf{1} \to X$ determines a monoidal functor
\[
F_X \colon \Delta_{+}^{\mathrm{inj}} \to \sC
\]
that sends $[n]$ to $X^{\otimes n}$ and an injection $f \colon [n] \to [m]$ to the map $X^{\otimes n} \to X^{\otimes m}$ that, informally speaking, sends the $i$-th factor identically to the $f(i)$-th factor, and is given by the map $\mathbf{1} \to X$ on all other factors.

The theorem tells us that if the tensor product on $\sC$ commutes with $\Delta_+^{\mathrm{inj}}$-shaped colimits in both variables, the free $\E_1$-algebra on an $\E_0$-algebra $X$ in $\sC$ is given by the formula
\[
\mathrm{Free}^{\E_1}_{\E_0} X \simeq \colim_{[n] \in \Delta_+^{\mathrm{inj}}} X^{\otimes n}.
\]

This for instance holds if $\sC$ is the category of pointed spaces with the cartesian product.
In this case, the colimit can be identified with an object from classical homotopy theory: the \emph{James construction} on $X$. 

Another example where this condition holds is the category of spectral Lie algebras; it is shown in \cite{BH} that the cartesian product on this category commutes with colimits indexed by weakly contractible categories in both variables.
It follows that the free $\E_1$-monoid on a spectral Lie algebra is given by the James construction as well.
It is also proven in op. cit. that for any spectral Lie algebra $X$, the free $\E_1$-monoid on $X$ is given by $\Omega \Sigma X$, without connectivity assumptions on $X$, so that we can conclude that
\[
\Omega \Sigma X \simeq \colim_{[n] \in \Delta^{\mathrm{inj}}_{+}} X^{\times n}.
\]
\end{example}

\begin{example}
Let $\E_0 \to \E_\infty$ be the standard inclusion. 
By Example \ref{ex: env-e0-com} we have $\Env_{\Com}(\E_0) = \Fin^{\mathrm{inj}}$. Any $X \in \Alg_{\E_0}(\sC)$ determines a symmetric monoidal functor
\[
F_X \colon \Fin^{\mathrm{inj}} \to \sC,
\]
given by sending a set $T$ to the tensor $X^{\otimes T}$, and sending an injection $f\colon T \to S$ to the inclusion $X^{\otimes T} \to X^{\otimes S}$ determined by $f$.

We now find that
\[
\mathrm{free}_{\E_0}^{\E_\infty} X \simeq \colim_{T \in \Fin^{\mathrm{inj}}} X^{\otimes T}.
\]
Once again when $\sC$ is the symmetric monoidal category of spaces with the cartesian product this colimit has been studied before: it is called the homotopy infinite symmetric product on $X$, denoted $\mathrm{SP_h}(X)$. Recall that the free commutative group on $X$ is computed as $\Omega^\infty \Sigma^\infty X$, and moreover that when $X$ is connected this agrees with the free commutative monoid on $X$. We have therefore recovered the theorem of Schlichtkrull \cite{Sch}, which states that for connected $X$,
\[
\Omega^\infty \Sigma^\infty X \simeq \mathrm{SP_h}(X).
\]

Again, the same result holds in the category of spectral Lie algebras without any connectivity assumptions.
\end{example}

\section{Colimits of \texorpdfstring{$\sO$}{O}-algebras}

As another application of \Cref{thm: free-algebra-colim-env}, we may provide formulas for colimits in $\Alg_{\sO}$.

\subsection{Colimits as free algebras}

Given a category $K$, there is an operad $K^{\sqcup}$ called the \emph{cocartesian operad} on $K$ such that: 
\begin{itemize}
    \item The underlying category of $K^{\sqcup}$ is $K$.
    \item Given objects $x_1, \ldots, x_n, y \in K$, there is an equivalence of spaces 
    \[
    \mathrm{Mul}_{K^{\sqcup}}(\{x_i\}, y) \simeq \prod_{i = 1}^n \Map_K(x_i, y),
    \]
    where $\mathrm{Mul}_{K^{\sqcup}}(\{x_i\}, y)$ denotes the space of multimorphisms in $K^{\sqcup}$.
\end{itemize}

See \cite[§2.4.3]{HA} for more details.

For an operad $\sO$, we write $\sO_K$ for the operad $K^{\sqcup} \times \sO$, where the product is taken in $\Op$.
By projecting to the second factor, we obtain a map of operads $\sO_K \to \sO$.
For any $\sO$-monoidal category $\sC$, it follows from \cite[Theorem 2.4.3.18]{HA} that there is an equivalence of categories
\begin{equation}\label{eq: OK-universal-prop}
\Alg_{\sO_K}(\sC) \simeq \Fun(K, \Alg_\sO(\sC)).
\end{equation}
In other words, an $\sO_K$-algebra is the same as a $K$-indexed diagram of $\sO$-algebras.
We will not distinguish between a functor $K \to \Alg_\sO(\sC)$ and the corresponding $\sO_K$-algebra.

\begin{prop} \label{prop: colim-as-free-algebra}
    Let $K$ be a category, $\sO$ an operad and $\sC$ an $\sO$-monoidal category.
    Given a diagram $f \colon K \to \Alg_\sO(\sC)$, there is an equivalence
    \[
    \mathrm{free}_{\sO_K}^{\sO}(f) \simeq \colim f,
    \]
    where the free functor is computed along the canonical map $p \colon \sO_K \to \sO$.
\end{prop}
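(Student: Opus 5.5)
The plan is to check that $\colim f$ has the universal property of the left adjoint to $\mathrm{fgt}^{\sO}_{\sO_K}$, evaluated at $f$. We do not want to assume the free functor exists in general, so the statement is read as: the value of $\mathrm{free}^{\sO}_{\sO_K}$ at $f$ exists exactly when $\colim f$ exists, and then the two agree. The argument is formal and rests on the equivalence \eqref{eq: OK-universal-prop}.

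The key step is to identify the restriction functor $\mathrm{fgt}^{\sO}_{\sO_K} \colon \Alg_{\sO}(\sC) \to \Alg_{\sO_K}(\sC)$, under \eqref{eq: OK-universal-prop}, with the diagonal functor $\mathrm{const} \colon \Alg_\sO(\sC) \to \Fun(K, \Alg_\sO(\sC))$. For this, recall that the equivalence of \cite[Theorem 2.4.3.18]{HA} is natural in $K$. An operad map $K^{\sqcup} \times \sO \to \sC$ over $\sO$ corresponds to a functor $K \to \Alg_\sO(\sC)$, obtained by currying the $K^{\sqcup}$-variable. Precomposition with the projection $\sO_K \to \sO$ is the case of this naturality for the map $K \to \ast$, using that $\ast^{\sqcup} \times \sO \simeq \sO$, since $\ast^{\sqcup} \simeq \E_\infty$ is terminal in $\Op$. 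Naturality in $K$ then turns pulling back along $K \to \ast$ into precomposition with $K \to \ast$ on the functor side, and that is the constant diagram functor.

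Given this identification, for every $B \in \Alg_\sO(\sC)$ we get natural equivalences
\[
\Map_{\Alg_{\sO_K}(\sC)}(f, \mathrm{fgt}^{\sO}_{\sO_K} B) \simeq \Map_{\Fun(K,\Alg_\sO(\sC))}(f, \mathrm{const}_B) \simeq \Map_{\Alg_\sO(\sC)}(\colim f, B).
\]
So $\colim f$, whenever it exists, corepresents the functor that defines $\mathrm{free}^{\sO}_{\sO_K}(f)$. Conversely, a free algebra corepresents this same functor, and is therefore a colimit of $f$. Either way the two objects coincide.

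The main obstacle is justifying the naturality claim, i.e.\ that the equivalence \eqref{eq: OK-universal-prop} really intertwines restriction along $\sO_K \to \sO$ with the diagonal. If citing naturality is not accepted, I would argue directly. Write \eqref{eq: OK-universal-prop} as $\Alg_{K^\sqcup\times\sO}(\sC) \simeq \Fun(K, \Alg_\sO(\sC))$, coming from the universal property of $K^{\sqcup}$: operad maps out of $K^{\sqcup}$ into an operad are the same as functors from $K$ into its underlying category of commutative algebras, which here is the operad of $\sO$-algebras. Then check that the composite $K^{\sqcup} \times \sO \to \sO \to \sC$ corresponds to the functor $K \to \ast \to \Alg_\sO(\sC)$ picking out $B$.

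As an alternative route, one could apply \Cref{thm: free-algebra-colim-env}. The envelope $\Env_{\sO}(\sO_K)$ has $K$ as a final subcategory in each colour, so the resulting colimit formula reduces to $\colim f$. But that route needs the compatibility hypotheses of the theorem, whereas the adjunction argument above does not. So I would use the adjunction argument.
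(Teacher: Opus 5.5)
Your main argument is correct and is essentially the paper's. The paper likewise reduces the claim to identifying restriction along $p$ with the constant-diagram functor under \cref{eq: OK-universal-prop}. It checks this by noting that \cref{eq: OK-universal-prop} is restriction along $K\times\sO^\otimes\to\sO_K^\otimes$, and that composing this with $p$ gives the projection onto $\sO^\otimes$; your naturality along $K\to\ast$ is a repackaging of exactly this fact.

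One correction to the alternative route you set aside: $K$ is not final in $\Env_\sO(\sO_K)$ in general, so that route does not reduce to the underlying colimit. Take $\sO=\E_\infty$ and $K$ discrete on two objects with values $A,B$. Then $\Env_{\E_\infty}(K^\sqcup)\simeq\Fin\times\Fin$ has a terminal object, and the formula yields $A\otimes B$ rather than the underlying coproduct. Finality holds essentially only for sifted $K$, as in the paper's later proposition on sifted colimits.
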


\begin{remark}
This proposition should be read in the following way:
if one of $\mathrm{free}_{\sO_K}^{\sO}(f)$ and $\colim f$ exists, the other also exists and they are equivalent.
\end{remark}

\begin{proof}
    It suffices to show that if $A \in \Alg_{\sO}(\sC)$, then $p^*A$ is the constant $K$-indexed diagram at $A$.
    By \cite[Theorem 2.4.3.18]{HA}, the equivalence \cref{eq: OK-universal-prop} is given by restricting along a functor $K \times \sO^\otimes \to \sO_K^\otimes$ described in \cite[Example 2.4.3.5]{HA}.
    Therefore, the functor corresponding to $p^*A$ is given by the composite
    \[
    \begin{tikzcd}
        K \times \sO^\otimes \ar[r] & \sO_K^\otimes \ar[r, "p"] & \sO^\otimes \ar[r, "A"] & \sC^\otimes.
    \end{tikzcd}
    \]
    By definition the composite of the first two morphisms is the projection onto $\sO^\otimes$, and so this is indeed the constant diagram at $A$. 
\end{proof}

Combined with our previous results, this yields explicit formulas for computing colimits in $\Alg_\sO(\sC)$ in terms of colimits in $\sC$.

\begin{corollary} \label{cor: colims-via-envelopes}
    Suppose that $\sC$ is $\sO$-monoidally compatible with $\Env_{\sO}(\sO_K)$-shaped colimits.
    Then every $K$-shaped diagram of $\sO$-algebras in $\sC$ admits a colimit.
    Moreover, the colimit functor is given by
    \[
    \Fun(K, \Alg_{\sO}(\sC)) \simeq \Fun^{\sO}(\Env_{\sO}(\sO_K), \sC) \xrightarrow{\mathrm{colim}} \Alg_\sO(\sC).
    \]
\end{corollary}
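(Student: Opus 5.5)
The plan is to combine \Cref{thm: free-algebra-colim-env} with \Cref{prop: colim-as-free-algebra}, applied to the canonical projection $p\colon \sO_K \to \sO$. By construction $\sO_K = K^{\sqcup}\times \sO$ together with $p$ is an object of $\Op_{\sO}$. Since $\sC$ is assumed $\sO$-monoidally compatible with $\Env_{\sO}(\sO_K)$-shaped colimits, \Cref{thm: free-algebra-colim-env} applies with $\sP = \sO_K$. It gives a left adjoint $\mathrm{free}^{\sO}_{\sO_K}$ to $p^* = \mathrm{fgt}^{\sO}_{\sO_K}$, and it identifies this left adjoint with the composite
\[
\Alg_{\sO_K}(\sC) \simeq \Fun^{\sO\textup{-}\otimes}(\Env_{\sO}(\sO_K),\sC) \xrightarrow{\colim} \Alg_{\sO}(\sC).
\]

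Next I identify $\Alg_{\sO_K}(\sC)$ with $\Fun(K,\Alg_{\sO}(\sC))$ via \cref{eq: OK-universal-prop}. The proof of \Cref{prop: colim-as-free-algebra} shows that under this equivalence $p^*$ becomes the constant-diagram functor $\mathrm{const}\colon \Alg_{\sO}(\sC) \to \Fun(K,\Alg_{\sO}(\sC))$. We already know $p^*$ has a left adjoint, so $\mathrm{const}$ has one as well. A left adjoint to $\mathrm{const}$ is exactly a colimit functor: it sends each diagram $f$ to an object $L(f)$ with natural equivalences $\Map(L(f),A)\simeq \Map(f,\mathrm{const}_A)$. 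Hence every $K$-shaped diagram in $\Alg_{\sO}(\sC)$ has a colimit, given by $\mathrm{free}^{\sO}_{\sO_K}(f)$. This is the statement of \Cref{prop: colim-as-free-algebra}, now with existence supplied by the theorem.

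Substituting the formula for the free functor from the first step then gives the displayed composite
\[
\Fun(K, \Alg_{\sO}(\sC)) \simeq \Fun^{\sO}(\Env_{\sO}(\sO_K), \sC) \xrightarrow{\colim} \Alg_\sO(\sC).
\]
Here the first equivalence composes \cref{eq: OK-universal-prop} with the universal property of the envelope from \Cref{rem: univ-alg-env}.

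The only point needing care is that the adjunction equivalence $p^*\simeq \mathrm{const}$ must be natural in $A$. This holds because the equivalence \cref{eq: OK-universal-prop} is functorial, being given by restriction along $K\times\sO^\otimes\to\sO_K^\otimes$. The identification of $p^*A$ as constant is therefore natural in $A$, and uniqueness of left adjoints finishes the argument. I expect no serious obstacle beyond this bookkeeping.
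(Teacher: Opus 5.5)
Your proposal is correct and follows the paper's own proof: apply \Cref{thm: free-algebra-colim-env} with $\sP=\sO_K$, then use \Cref{prop: colim-as-free-algebra}, which identifies $p^*$ with the constant-diagram functor, so that the free functor is the colimit functor. Your remark on naturality of $p^*\simeq\mathrm{const}$ (restriction along the projection $K\times\sO^\otimes\to\sO^\otimes$) makes explicit a point the paper leaves implicit in the proof of that proposition.
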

\begin{proof}
    This follows from \cref{thm: free-algebra-colim-env} and \cref{prop: colim-as-free-algebra}.
\end{proof}

\subsection{Examples of colimits in $\Alg_{\sO}(\sC)$}
We now sample some applications of  \cref{cor: colims-via-envelopes} by reproving a number of key results from \cite{HA}. We begin with a general observation:

\begin{prop}
Suppose that $\sC$ is an $\sO$-monoidal category that is compatible with all colimits. Then $\Alg_{\sO}(\sC)$ is cocomplete.
\end{prop}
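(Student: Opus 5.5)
The plan is to apply \cref{cor: colims-via-envelopes} directly, one small indexing category $K$ at a time. Fix an arbitrary small category $K$; we must show that every diagram $f\colon K \to \Alg_{\sO}(\sC)$ has a colimit. By \cref{cor: colims-via-envelopes}, it suffices to check that $\sC$ is $\sO$-monoidally compatible with $\Env_{\sO}(\sO_K)$-shaped colimits. By hypothesis $\sC$ is compatible with all colimits: each fiber $\sC_X$ admits all (small) colimits, and each $\otimes_f$ preserves them in each variable separately. In particular this holds for colimits indexed by $\Env_{\sO}(\sO_K)$, and the corollary then produces the colimit of $f$, namely $\colim_{\Env_{\sO}(\sO_K)} F_f$. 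Since $K$ was arbitrary, $\Alg_{\sO}(\sC)$ is cocomplete.

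The one point needing care is size. Cocompleteness here means admitting small colimits, so $\sC$ is assumed to admit and be compatible with colimits indexed by small categories. We therefore have to check that $\Env_{\sO}(\sO_K)$ is small, or at least essentially small, whenever $K$ is small. Here I would assume, as is implicit throughout the paper, that $\sO$ is small.

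Then $\sO_K = K^{\sqcup}\times \sO$ is a small operad. Indeed, $K^{\sqcup,\otimes}$ is built from $K$ and $\Fin_\ast$ (see \cite[§2.4.3]{HA}), and products of small categories are small. By \cref{lem:Univ_Prop_Env}, $\Env_{\sO}(\sO_K)^\otimes$ is a pullback of $\sO_K^\otimes$ and $\Act(\sO^\otimes)\subset \Fun([1],\sO^\otimes)$ over $\sO^\otimes$. All three of these categories are small, so the pullback is small. Its fibers $\Env_{\sO}(\sO_K)_X$ are then small as well. These fibers are what the compatibility condition actually concerns, through \cref{univ_prop_colim} as used in \cref{thm: free-algebra-colim-env}.

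I expect this size bookkeeping to be the only real obstacle. If one were instead worried about $\sO$ being large, the fallback is to note that $\Env_{\sO}(\sO_K)$ is indexed by active arrows of $\sO$ whose source lies over objects of $K$. It is then small as soon as $\sO$ has small mapping spaces and a small set of colors, which is the standing assumption.
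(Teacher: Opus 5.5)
Your proposal is correct and is essentially the paper's argument. The paper's one-line proof applies the colimit-as-free-algebra proposition (i.e.\ the corollary combining it with the main theorem) to an arbitrary diagram shape $K$, exactly as you do, and your check that $\Env_{\sO}(\sO_K)$ and its fibers are small when $\sO$ and $K$ are small is sensible bookkeeping that the paper leaves implicit.
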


\begin{proof}
We may apply \Cref{prop: colim-as-free-algebra} for any diagram shape to construct arbitrary colimits in $\Alg_{\sO}(\sC)$.
\end{proof}

In specific situations our result also provides formulas for colimits in $\Alg_{\sO}(\sC)$.

\begin{prop}
    Let $\sC$ be a symmetric monoidal category such that the tensor product commutes with geometric realizations.
    Let $B \leftarrow A \rightarrow C$ be a span of commutative algebras in $\sC$.
    Then the pushout is given by $B \otimes_A C$.
\end{prop}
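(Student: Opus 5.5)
We apply \cref{cor: colims-via-envelopes} with $\sO = \E_\infty$ and $K$ the span category $\Lambda = \{1 \leftarrow 0 \rightarrow 2\}$. The plan has three steps. First, compute $\Env_{\E_\infty}(\E_{\infty,\Lambda})$. Second, check that the monoidal compatibility hypothesis reduces to compatibility with geometric realizations. Third, evaluate the colimit of $F_f$ over this envelope and identify it with the bar construction $B \otimes_A C$.

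\emph{Step 1: the envelope.} Since $\E_{\infty,\Lambda} = \Lambda^{\sqcup} \times \Fin_* \simeq \Lambda^{\sqcup}$, \cref{lem:Univ_Prop_Env} describes $\Env(\Lambda^\sqcup)$ as follows. Objects are finite families $(x_s)_{s\in S}$ of objects of $\Lambda$. A morphism $(x_s)_{s\in S} \to (y_t)_{t\in T}$ consists of a map of finite sets $\alpha \colon S \to T$ together with morphisms $x_s \to y_{\alpha(s)}$ in $\Lambda$. Hence $\Env(\Lambda^\sqcup) \simeq (\Fin_{/\Lambda})$, the free cocompletion of $\Lambda$ under finite coproducts, equipped with the disjoint union monoidal structure. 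The symmetric monoidal functor $F_f$ sends a family $(x_s)_{s\in S}$ to $\bigotimes_s f(x_s)$.

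\emph{Step 2: the colimit.} I will compute the colimit in stages by finding a final subcategory. The main obstacle is that the hypothesis only grants compatibility with geometric realizations, whereas \cref{cor: colims-via-envelopes} asks for compatibility with colimits indexed by the whole envelope. I therefore need either a final functor $\Delta^{\op} \to \Env(\Lambda^\sqcup)$, or a decomposition of the envelope colimit into sifted colimits together with colimits that the tensor product preserves for formal reasons.

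The cleanest route I would try is to avoid the corollary's full hypothesis. Instead, build the bar construction $\mathrm{Bar}_\bullet(B,A,C) = B \otimes A^{\otimes \bullet} \otimes C$ directly as a simplicial object in $\Alg_{\E_\infty}(\sC)$. Forgetful functors from commutative algebras preserve sifted colimits that the tensor product commutes with, and geometric realizations are sifted. Consequently the realization $|\mathrm{Bar}_\bullet|$, computed in $\sC$, is a commutative algebra and is the colimit in $\Alg_{\E_\infty}(\sC)$. For this step I would use \cref{univ_prop_colim} with $\sC = \Delta^{\op}$ carrying the cocartesian structure, or simply with $\Delta^{\op}$ regarded as a diagram of algebras.

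\emph{Step 3: identification.} It remains to check that $|\mathrm{Bar}_\bullet|$ is the pushout. In $\Alg_{\E_\infty}(\sC)$, finite coproducts are tensor products; this is the case $K$ discrete of \cref{cor: colims-via-envelopes}, where the envelope is $\Fin_{/K}$ and the colimit is computed by the terminal object, requiring no compatibility. Hence each $\mathrm{Bar}_n = B \sqcup A^{\sqcup n} \sqcup C$. The simplicial object $\mathrm{Bar}_\bullet$ is the standard resolution whose colimit is $B \sqcup_A C$; this is the formal fact that a pushout is the geometric realization of the two-sided bar construction in any category with finite coproducts. Alternatively, one can exhibit $\Delta^{\op}$-indexed diagrams of coproducts as cofinal in $\Fin_{/\Lambda}$, which is the step I expect to be the delicate one if one insists on routing through the envelope. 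Combining Step 3 with Step 2, the pushout exists and is computed in $\sC$ by $|B \otimes A^{\otimes\bullet}\otimes C| = B\otimes_A C$.
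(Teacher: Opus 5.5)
Your outline is sound, but it takes a different route from the paper, and the step you call delicate is moved elsewhere rather than avoided.

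The paper applies \cref{cor: colims-via-envelopes} to the span directly. It checks, via Quillen's Theorem A, that $u\colon \Delta^{\op}\to\Env(\Lambda^{\sqcup})$, $[n]\mapsto ba^nc$, is final. This one check does two things at once. It verifies the compatibility hypothesis, since every $\Env(\Lambda^\sqcup)$-shaped colimit becomes a geometric realization. It also identifies $\colim F_f$ with $|B\otimes A^{\otimes\bullet}\otimes C|$. The check is easy: for $x=b^ka^lc^m$ one has $\mathrm{Hom}(x,ba^nc)\cong \mathrm{Hom}_{\Delta}([n],[1])^{l}$ naturally in $[n]$. So $x/\Delta^{\op}$ is the category of simplices of $(\Delta^1)^l$, which is weakly contractible.

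You instead combine three facts. First, geometric realizations of $\E_\infty$-algebras are computed in $\sC$. Second, finite coproducts of $\E_\infty$-algebras are tensor products; your terminal-object argument for this is correct. Third, pushouts are realizations of coproduct bar constructions in any category with finite coproducts. This is a legitimate, more classical proof, and your Step 1 is never actually used in it.

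The gaps are in how you justify the first and third facts.

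\begin{itemize}
\item Your third ``formal fact'' is, for $\infty$-categories, exactly the finality of $u$ in the free finite-coproduct completion $\Env(\Lambda^\sqcup)$. One also needs the observation that the coproduct-preserving extension of the span is its left Kan extension. So it must be proved or cited, not called formal.
\item Your justification of Step 2 fails as written. $\Delta^{\op}$ has no binary coproducts, hence no cocartesian monoidal structure, so \cref{univ_prop_colim} does not apply.
\item Applying the corollary with $K=\Delta^{\op}$ instead needs compatibility with $\Env_{\E_\infty}((\E_\infty)_{\Delta^{\op}})$-shaped colimits. That again requires a finality argument: either the one in the paper's later proposition on sifted colimits, specialised to $\Delta^{\op}$, or the corresponding result in \S3.2.3 of Lurie's Higher Algebra.
\end{itemize}

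Your route gives modularity and a statement valid in any category with finite coproducts. The paper's route gets existence and the formula at once from a single finality check in the envelope.
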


\begin{proof}
    Let $K$ be the span category $ b \leftarrow a \rightarrow c$.
    Taking $\sO = \E_\infty$, we get $\sO_K = K^\sqcup$.
    The category $\Env(K^\sqcup)$ can be described as follows: objects may be identified with words $b^k a^l c^m$ such that $k, l, m \in \Z_{\geq 0}$.
    A morphism from $b^k a^l c^m \to b^x a^y c^z$ is given by a function $f \colon \langle k + l + m\rangle^\circ \to \langle x + y + z \rangle^\circ$ of finite sets such that $f(i) \leq x$ if $i \leq k$ and $f(i) > x + y$ if $i > k + l$.
    The monoidal product is given by adding exponents.
    
    There is an obvious functor $\Delta^{\op} \to \Env(K^\sqcup)$ sending $[n]$ to $ba^nc$. An easy application of Quillen's Theorem A shows that it is final, proving the proposition.
\end{proof}

\begin{prop}
Suppose $\sC$ is an $\sO$-monoidal category and suppose that the tensor product $\otimes_f\colon \sC_{X_1}\times\dots \sC_{X_n} \to \sC_Y$ associated to any active morphism $f\in \mathrm{Mul}_{\sO}(\{X_i\},Y)$ commutes with sifted colimits\footnote{or equivalently that it commutes with sifted colimits in each variable.}. Then the category of $\sO$-algebras admits sifted colimits and they are preserved by the forgetful functor $\mathrm{fgt}\colon \Alg_{\sO}(\sC)\to \sC$.
\end{prop}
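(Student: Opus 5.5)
Let $K$ be a sifted category and $f\colon K \to \Alg_{\sO}(\sC)$ a diagram. The plan is to apply \Cref{cor: colims-via-envelopes} and then identify the resulting colimit. The corollary gives $\colim f$ as the colimit of the $\sO$-monoidal functor $F_f\colon \Env_{\sO}(\sO_K) \to \sC$, provided $\sC$ is compatible with $\Env_{\sO}(\sO_K)$-shaped colimits. Compatibility with arbitrary shapes is not available. So the first step is to find a final functor $K \to \Env_{\sO}(\sO_K)$ (or, with several colours, a colour-wise version). This reduces both the hypothesis check and the computation to $K$-shaped, hence sifted, colimits. The hypothesis is used in its per-variable form via the footnote: each variable separately commutes with sifted colimits.

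To fix ideas, first take $\sO$ single-coloured. By \cref{lem:Univ_Prop_Env}, the fibre of $\Env_{\sO}(\sO_K)$ over the colour is the category of pairs $(\vec{x}, \phi)$. Here $\vec{x} = (x_1,\dots,x_n)$ is a tuple of objects of $K$ and $\phi\in \mathrm{Mul}_\sO(n,1)$. A morphism $(\vec{x},\phi) \to (\vec{y},\psi)$ consists of an inert-active factorisation datum in $\sO$ over $\psi$, together with maps in $K$ from each $x_i$ into the $y_j$ it is sent to. This uses the description of multimorphisms in $K^\sqcup$. The candidate functor $\iota\colon K \to \Env_{\sO}(\sO_K)$ sends $x$ to $((x),\mathrm{id})$, which is exactly the unit $\eta$ of \cref{rem: univ-alg-env} restricted to underlying categories. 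Then $F_f\circ\iota \simeq f$ composed with the forgetful functor, by construction of $F_f$.

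The main step is showing $\iota$ is final, i.e.\ that for each $e=(\vec{y},\psi)$ the comma category $K\times_{\Env} \Env_{e/}$ is weakly contractible. Unwinding, a map $e \to \iota(x)$ consists of maps $y_j\to x$ for all $j$, since the active part must land in arity one, plus an operation of $\sO$ which is forced. So the comma category is equivalent to the category of cocones under $y_1,\dots,y_n$ in $K$, namely $K_{\vec{y}/} = K_{y_1/}\times_K\cdots\times_K K_{y_n/}$, up to the fixed choice of operation. Weak contractibility of these iterated cocone categories for every finite family, including the empty family where it says $K$ itself is weakly contractible, is precisely the definition of $K$ being sifted, i.e.\ $K\to K^{n}$ final. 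Thus $\iota$ is final. I expect this identification of the comma category to be the main obstacle, especially handling arity-zero operations and multiple colours. For several colours I would work fibrewise over each colour $X$ of $\sO$, with $K \times \{X\}$ mapping to the fibre $\Env_{\sO}(\sO_K)_X$, and run the same argument. The caveat is that if $\sO$ has no operation witnessing a map between colours, the relevant pieces decompose; I would handle this by checking finality component-wise over $\pi_0$ of the relevant spaces of operations.

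Granting finality, compatibility of $\sC$ with $\Env_{\sO}(\sO_K)$-shaped colimits reduces to compatibility with $K$-shaped colimits. The reduction uses that colimits over $\Env$ exist, being computed by $K$-colimits, and that $\otimes_f$ preserves them since it preserves $K$-colimits in each variable. \Cref{cor: colims-via-envelopes} then gives existence of $\colim f$. It also gives that its underlying object is $\colim_{\Env} F_f \simeq \colim_K F_f\circ\iota \simeq \colim_K \mathrm{fgt}\circ f$, so $\mathrm{fgt}$ preserves the colimit.
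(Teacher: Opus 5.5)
Your proposal is correct and follows essentially the same route as the paper. You show that the canonical map $K \to \Env_{\sO}(\sO_K)$ is final by identifying each slice $e/K$, for $e = (\psi; y_1,\dots,y_n)$, with $K\times_{K^n}(K^n)_{(y_1,\dots,y_n)/}$, which is weakly contractible because the diagonal of a sifted category is final, and then you apply \cref{cor: colims-via-envelopes}. Your caveat about colours decomposing is unnecessary: for each colour $X$ the same slice computation goes through verbatim, since a map $e\to\iota(x)$ is forced to lie over $\psi$, and this is what the paper means by ``completely analogously''.
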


\begin{proof}
We will prove the proposition in case $\sO$ has a single color. 
The general case is proved completely analogously, but involves more notation.

Let $S$ be a sifted category.
We will show that the canonical map $S \to \Env_\sO(\sO_S)$ is final.
For this, we have to prove that for every object $x \in \Env_\sO(\sO_S)$ the slice category $x / S$ is contractible.
The object $x$ can be identified with a tuple $(\phi ; x_1, \ldots, x_n)$ where $\phi \in \sO(n)$ and $x_1, \ldots, x_n \in S$.
By inspection the slice $x / S$ is equivalent to the category $S \times_{S^{n}} S^{n}_{(x_1, \ldots, x_n)/}$, where the map $S \to S^n$ is the diagonal. Since $S$ is sifted, its diagonal is final, implying that this category is weakly contractible by Quillen's Theorem A.
\end{proof}

Recall that given an $\sO$-monoidal category, $\Alg_{\sO}(\sC)$ is again an $\sO$-monoidal category such that the forgetful functor $\Alg_{\sO}(\sC)\to \sC$ is $\sO$-monoidal, see \cite[Example 3.2.4.4]{HA}. As our final example we prove \cite[Proposition 5.1.2.9]{HA} in the special case that $k = \infty$.

\begin{prop}
Suppose $\sC$ is a symmetric monoidal category that is compatible with all colimits. Then the symmetric monoidal category $\Alg_{\E_{\infty}}(\sC)$ is compatible with all colimits on weakly contractible indexing categories.
\end{prop}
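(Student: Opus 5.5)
Recall that the symmetric monoidal structure on $\Alg_{\E_\infty}(\sC)$ is the coproduct (cocartesian) structure: for commutative algebras $A,B$ the tensor $A\otimes B$ is their coproduct in $\Alg_{\E_\infty}(\sC)$, and the underlying object is $A\otimes B$ in $\sC$. Moreover, by the earlier proposition $\Alg_{\E_\infty}(\sC)$ is cocomplete, since $\sC$ is compatible with all colimits. The claim is then that for each fixed $B$, the functor $-\otimes B = -\sqcup B$ commutes with colimits indexed by weakly contractible $K$. The plan is to prove this purely formally from the coproduct description, with the explicit formulas of \cref{cor: colims-via-envelopes} serving only as a sanity check.

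Fix a weakly contractible $K$, a diagram $f\colon K\to\Alg_{\E_\infty}(\sC)$ and an algebra $B$. We want the canonical map
\[
\colim_{k\in K}(f(k)\sqcup B)\to (\colim_K f)\sqcup B
\]
to be an equivalence. Colimits commute with colimits, so $\colim_K(f\sqcup \mathrm{const}_B)\simeq \colim_K f\sqcup \colim_K \mathrm{const}_B$. Since $K$ is weakly contractible, $\colim_K \mathrm{const}_B\simeq B$: the colimit of a constant diagram is the tensor of $B$ with $|K|\simeq\ast$, which holds in any category admitting these colimits. Combining the two gives the required equivalence. The only structural inputs are that $\otimes$ on $\Alg_{\E_\infty}(\sC)$ is the coproduct, which is \cite[Proposition 3.2.4.7]{HA}, and that the algebra colimits exist by the cocompleteness proposition.

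The main point to check is that the equivalence obtained is the canonical comparison map, i.e. that the identification $\colim \mathrm{const}_B\simeq B$ is induced by the fold maps used in the comparison. This follows by naturality of the interchange of colimits. If one wants a proof in the spirit of this section, one can instead identify $f\sqcup\mathrm{const}_B$ with a $K$-diagram. Via \cref{prop: colim-as-free-algebra} this expresses both sides as free algebras over $\Env(K^{\sqcup})$ versus $\Env((K\times\{0\to 1\})^{\sqcup})$-type categories. Finality would then reduce to weak contractibility of $K$ by Quillen's Theorem A. I expect the formal coproduct argument to be the cleaner route and would present that.
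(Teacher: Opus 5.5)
Your argument is correct, but it is not the route the paper takes. You invoke the cocartesianness of the induced symmetric monoidal structure on $\Alg_{\E_\infty}(\sC)$ (HA, Proposition 3.2.4.7). After that, the statement is the purely formal fact that in any category with the relevant colimits, $-\sqcup B$ preserves colimits over weakly contractible $K$. The hypothesis on $\sC$ then enters only through the existence of colimits of algebras. The $n$-ary case reduces to the binary one, since the $n$-fold tensor is an $n$-fold coproduct.

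The paper shares your first step: $A\simeq\colim_K\underline{A}$, hence $(\colim_K G)\otimes A\simeq\colim_K G\otimes\colim_K\underline{A}$. It never uses that $\otimes$ on algebras is the coproduct. Instead it computes underlying objects in $\sC$ via \cref{prop: colim-as-free-algebra}, in three steps:
\begin{enumerate}
\item The tensor of the two colimits becomes $\colim_{\sE_K\times\sE_K}F_G\otimes F_{\underline{A}}$, because $\otimes$ on $\sC$ distributes over colimits.
\item This is restricted along the diagonal of $\sE_K=\Env_{\E_\infty}((\E_\infty)_K)$. The diagonal is final because $\sE_K$, the finite coproduct completion of $K$, has finite coproducts.
\item Dunn additivity, i.e.\ that $\otimes\colon\sC\times\sC\to\sC$ is symmetric monoidal, identifies the restricted functor with $F_{G\otimes\underline{A}}$, whose colimit is $\colim_K(G\otimes\underline{A})$.
\end{enumerate}
So finality of the diagonal of the indexing category $\sE_K$ plays the role that cocartesianness of $\Alg_{\E_\infty}(\sC)$ plays in your proof.

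Your version is shorter and isolates exactly why weak contractibility is needed. However, it imports a nontrivial result from HA, whereas the section's stated aim is to reprove such results from the envelope formula. The paper's computation stays inside that formalism and yields an explicit description of the underlying object of the colimit.
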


\begin{proof}
Let $K$ be a weakly contractible category and let $G\colon K\to \Alg_{\E_{\infty}}(\sC)$ be a functor. We will write write $\sE_K$ for $\Env_{\E_{\infty}}((\E_{\infty})_K)$. By comparing universal properties one sees that $\sE_K$ is precisely the finite coproduct completion of $K$ equipped with the cocartesian symmetric monoidal structure. Also let $F_G\colon \sE_K \to \sC$ denote the unique $\E_{k'}$-monoidal functor associated to $G$. Now note that since $K$ is weakly contractible, there is an equivalence $A \simeq \colim_K \underline{A}$, where $\underline{A}$ denotes the constant functor on $A$. Therefore 
\[
(\operatornamewithlimits{colim}_{K}G)\otimes A \simeq \operatornamewithlimits{colim}_K G\otimes\operatornamewithlimits{colim}_K \underline{A}
\] in $\Alg_{\E_{\infty}}(\sC)$. By \cref{prop: colim-as-free-algebra} this is computed in $\sC$ as 
\[
\operatornamewithlimits{colim}_{\sE_K} F_G\otimes \operatornamewithlimits{colim}_{\sE_K} F_{\underline{A}} \simeq \operatornamewithlimits{colim}_{\sE_K\times \sE_K} F_G \otimes F_{\underline{A}}.
\]
Since $\sE_K$ has finite coproducts, the diagonal $\sE_{K} \to \sE_K\times \sE_K$ is a right adjoint and so final. Moreover, by Dunn additivity, the tensor product ${- \otimes - }\colon \sC\times\sC\to \sC$ is a symmetric monoidal functor and therefore so is $F_G \otimes F_{\underline{A}}$. It follows that the restriction of $F_G \otimes F_{\underline{A}}$ to $\sE_{K}$ is induced by its restriction along $K \to K\times K \to \sE_K\times \sE_K$, which is clearly $G\otimes \underline{A}$. Combining these two observations we we obtain further equivalences
\[
\operatornamewithlimits{colim}_{\sE_K\times \sE_K} F_G \otimes F_{\underline{A}} \simeq \operatornamewithlimits{colim}_{\sE_K} F_G \otimes F_{\underline{A}}\simeq \operatornamewithlimits{colim}_{\sE_{K}} F_{G \otimes \underline{A}}.
\]
By applying \cref{prop: colim-as-free-algebra} again this final expression computes $\colim_{K}(G\otimes \underline{A})$ in $\Alg_{\E_{k'}}(\sC)$, and so we have proven the proposition.
\end{proof}

\bibliographystyle{amsalpha}

\end{document}